\newtheorem{thm}{Theorem}[section]
\newtheorem{cor}[thm]{Corollary}
\newtheorem{lem}[thm]{Lemma}
\newtheorem{prop}[thm]{Proposition}
\theoremstyle{definition}
\newtheorem{defn}[thm]{Definition}
\theoremstyle{remark}
\newtheorem{rem}[thm]{Remark}
\newtheorem{ex}[thm]{Example}
\begin{document}
\title[Almost product structures on statistical manifolds]{Almost product structures on statistical manifolds and para-K\"{a}hler-like statistical submersions}
\author[G.E. V\^{\i}lcu]{Gabriel Eduard V\^{\i}lcu}

\date{}

\abstract  The main purpose of the present work is to investigate statistical manifolds endowed with almost product structures. We prove that the statistical structure of a para-K\"{a}hler-like statistical manifold of constant curvature in the Kurose's sense is a Hessian structure. We also derive the main properties of statistical submersions which are compatible with almost product structures.
The results are illustrated by several nontrivial examples. \\ \\
{\bf Keywords:} affine connection, conjugate connection, statistical manifold, almost product structure, statistical submersion.\\ \\
{\bf 2010 Mathematics Subject Classification:} 53A15, 53C50, 53B05, 60D05, 62B10.\\
\endabstract

\maketitle

\section{Introduction}

A statistical manifold is a semi-Riemannian manifold $(M,g)$ equipped with an additional structure given by a pair of torsion-free affine connections $(\nabla,\nabla^*)$ which are dual with respect to $g$. This concept provides a setting for the field of information geometry, a domain having deep links with several research areas \cite{ABKL,BOY,MR}.

On the other hand, the notion of Riemannian submersion, which is the dual of the notion of isometric immersion, was introduced by O'Neill in \cite{ON1} and Gray \cite{GR}. Later, Riemannian submersions between manifolds endowed with various geometric structures were studied by many authors (see, e.g., \cite{FIP,SAH} and the references therein). Statistical submersions between statistical manifolds have been introduced and investigated in \cite{AH}. Statistical manifolds equipped with remarkable geometric structures, as well as statistical submersions between such manifolds were also studied \cite{AQS,FHOSS,MS,NOD,TAK2,TAK4}. In the present paper, we investigate statistical manifolds equipped with almost product structures, introducing the notions of para-K\"{a}hler-like statistical manifold and para-K\"{a}hler-like statistical submersion.  Recall that para-K\"{a}hler structures were first investigated by Rashevski \cite{RAS}, under the name of  stratified spaces. However, the explicit definition and the main properties of para-K\"{a}hler manifolds  were given in \cite{ROZ,RUS}. We note that these manifolds were also studied by Libermann in the context of
$G$-structures \cite{LIB}. Presently, there is a great interest in this subject (see, e.g., the recent papers \cite{AG,IC,Petr}), due to the fact that para-K\"{a}hler structures are related to a number of interesting topics, not only from mathematics (see, for instance, the very interesting survey \cite{CFG}), but also from mechanics and theoretical physics \cite{COR,KT}. Moreover, very recently, Fei and Zhang \cite{FJ} defined the concept of almost Codazzi-para-K\"{a}hler manifold, investigating the interaction of Codazzi couplings (see \cite{TZ}) with para-K\"{a}hler geometry. In particular, they proved that any statistical manifold admits a para-K\"{a}hler structure in some very natural conditions and discussed the significance of the result under the context of information geometry and theoretical physics, giving a strong motivation for studying such kind of statistical manifolds (see Section 4 in \cite{FJ}).

The present work is organized as follows. Section 2 contains definitions and
basic properties of statistical manifolds and statistical submersions. In section 3 we investigate statistical manifolds with almost product structures and introduce the concept of para-K\"{a}hler-like statistical manifold. In one of the main results, we prove that the statistical structure of a para-K\"{a}hler-like statistical manifold of constant curvature in the Kurose's sense is a Hessian structure.
Section 4 is devoted to the study of the para-K\"{a}hler-like statistical submersions. We study the transference of the structures defined on the total space of the submersion and investigate the geometry of the fibers and base space. Several illustrative examples are given in the last section of the article. Apart from examples constructed explicitly in coordinates, we use the Sasaki metric to provide para-K\"{a}hler-like statistical structures on the tangent bundles of para-Hermitian-like manifolds, and we show that the statistical manifolds corresponding to some standard statistical models, like the well-known normal distribution, Poisson distribution, multinomial distribution, multivariate normal distribution, Dirichlet distribution and Von Mises-Fisher distribution, can be equipped with such structures.

\section{Preliminaries}

In this section we provide the most important definitions and notations for
our framework based mainly on \cite{AH,AMN,ON3}.

Let $(M,g)$ be a semi-Riemannian manifold and $\nabla$ be an affine connection on $M$. The \emph{conjugate affine connection} $\nabla^*$ of $\nabla$ with respect to the metric $g$ is given by
\begin{equation}\label{1}
Eg(F,G)=g(\nabla_EF,G)+g(F,\nabla^*_EG),
\end{equation}
for all $E,F,G\in\Gamma(TM)$, where $\Gamma(TM)$ denotes the set of smooth tangent vector fields on $M$.

The pair $(\nabla,g)$  is said to be a \emph{statistical structure} on $M$ if the torsion tensor field of $\nabla$ vanishes and the covariant derivative $\nabla g$ is symmetric. In this case, the triple $(M,\nabla,g)$ is said to be a \emph{statistical manifold}. It is easy to see that if $(M,\nabla,g)$ is a statistical manifold, so is  $(M,\nabla^*,g)$ \cite{FH}. Moreover,  the triple $(M,\nabla^*,g)$ is said to be the \emph{dual statistical manifold} of $(M,\nabla,g)$ and the triple $(\nabla,\nabla^*,g)$ is called the \emph{dualistic structure} on $M$.

We also recall that the affine connections $\nabla$ and $\nabla^*$ are called \emph{dual connections} \cite{VOS} due to the fact that $(\nabla^*)^*=\nabla$.
On the other hand, it is easy to see that the dual connections $\nabla$ and $\nabla^*$ are related by \cite{ZHANG}
\begin{equation}\label{2}
\nabla+\nabla^*=2\nabla^{0},
\end{equation}
where $\nabla^{0}$ is Levi-Civita connection of the metric $g$. Hence, obviously, the geometry of statistical manifolds simply reduces to the semi-Riemannian geometry when $\nabla$ and $\nabla^*$ coincide \cite{NOG}. In this case, the pair $(\nabla^{0},g)$ is said to be a \emph{Riemannian statistical
structure} or a \emph{trivial statistical structure}. 

For the affine connection $\nabla$, we set the curvature tensor field $R^\nabla$ with sign convention
\[
R^\nabla(E,F)G=\nabla_E\nabla_FG-\nabla_F\nabla_EG-\nabla_{[E,F]}G
\]
for all vector fields $E,F,G$ on $M$. For the sake of simplicity, we denote shortly $R^\nabla$ by $R$ and similarly we denote the curvature tensor field $R^{\nabla^*}$ of the dual connection $\nabla^*$ by $R^*$. A statistical structure $(\nabla,g)$ is called a \emph{Hessian structure}  if $\nabla$ is flat, that is, the curvature tensor field $R$ identically vanishes. For a deeper study of the Hessian geometry, the reader can refer to \cite{BOYOM2,SHIMA}.

We remark that $R$ does not have the property of skew-symmetry relative to $g$, i.e.
\[
g(R(E,F)G,G')\neq-g(R(E,F)G',G).
\]
Hence, it is inappropriate to define the sectional curvature of a statistical manifold using $R$. However,
according to \cite{FH,OP}, for a statistical manifold $(M,\nabla,g)$, we can define the \emph{statistical curvature tensor field} $S$  by
\[
S(E,F)G=\frac{1}{2}[R(E,F)G+R^*(E,F)G]
\]
for $E,F,G\in\Gamma(TM)$. It is easy to see now that $S$
is a Riemann-curvature-like tensor \cite{OP2}: is skew-symmetric in $E,F$, satisfies the first Bianchi identity and is skew-symmetric relative to $g$. 

\begin{defn}\cite{FH}
Let $(M,\nabla,g)$ be a statistical manifold. For $p\in M$ and a non-degenerate 2-dimensional subspace $\Pi={\rm span}_{\mathbb{R}}\{v,w\}$ of $T_pM$, the expression
\[
k(p,\Pi)=\frac{g(S_p(v,w)w,v)}{g(v,v)g(w,w)-g(v,w)^2}
\]
is called the \emph{sectional curvature} of $(M,\nabla,g)$ for $\Pi$. The statistical manifold $(M,\nabla,g)$ is said to be \emph{of constant sectional curvature} $k$, where $k\in\mathbb{R}$, if $k(p,\Pi)$ is constant for all non-degenerate 2-dimensional subspaces $\Pi\subset T_pM$ and for all $p\in M$.
\end{defn}

It is known that the sectional curvature of a statistical manifold $(M,\nabla,g)$ is constant $k$ if and only if the statistical curvature tensor field $S$ satisfies \cite{FH}
\[
S(E,F)G=k\{g(F,G)E-g(E,G)F\}
\]
for all $E,F,G\in\Gamma(TM)$.

On the other hand, a statistical structure $(\nabla,g)$ on a manifold $M$ is said to be \emph{of constant curvature} if the curvature tensor field $R$ with respect to the affine connection $\nabla$ satisfies \cite{KUR}
\begin{equation}\label{scc}
R(E,F)G=k\{g(F,G)E-g(E,G)F\}
\end{equation}
for all $E,F,G\in\Gamma(TM)$. We say in this case that the statistical manifold $(M,\nabla,g)$ is a \emph{space of constant curvature in the Kurose's sense} \cite{FH}.

Due to the fact that $R$ and $R^*$ are related by
\[
g(R(G,G')E,F)=-g(R^*(G,G')F,E)
\]
for all $E,F,G,G'\in\Gamma(TM)$, it follows that if $(M,\nabla,g)$ is a space of constant curvature in the Kurose's sense, so is the dual statistical manifold $(M,\nabla^*,g)$. Hence, we deduce in this case that the sectional curvature of the statistical manifold $(M,\nabla,g)$ is constant $k$.

\begin{rem}\label{remi}
Let $M=\{p_{\xi}|\xi\in\Xi\}$ be an $n$-dimensional statistical model, that is a family of probability distributions $p_\xi=p(x;\xi)$, where $\xi=(\xi^1,\ldots,\xi^n)$ runs through an open domain $\Xi$ in $\mathbb{R}^n$ and $x$ runs through a measure space $\chi$ with measure $dx$ so that
$\int_\chi p(x;\xi)dx=1$ for each $\xi$. Then we may consider $M$ as a $C^{\infty}$ differentiable manifold  and we can define a statistical structure $(\nabla^{(\alpha)},g)$ on $M$, where
$g$ is the Fisher metric of the statistical model defined by
\[
g_{ij}=\mathbb{E}[\partial_il\partial_jl],
\]
where $\mathbb{E}$ is the mean, $\partial_i=\frac{\partial}{\partial_{\xi_i}}$, $l=l(x;\xi)=\log p(x;\xi)$, and $\nabla^{(\alpha)}$ is a connection on $M$, called  $\alpha$-connection, given by
\[
g(\nabla^{(\alpha)}_{\partial_i}\partial_j,\partial_k)=\mathbb{E}\left[\left(\partial_i\partial_jl+\frac{1-\alpha}{2}\partial_il\partial_jl\right)\partial_kl\right],
\]
where $\alpha$ is some arbitrary real number. It is easy to see that the $\alpha$-connection is torsion-free and $\nabla^{(-\alpha)}$ is conjugate of $\nabla^{(\alpha)}$ relative to the Fisher metric. Hence $(M,\nabla^{(\alpha)},g)$ is a statistical manifold of the statistical model $M$. We remark that the $0$-connection is the Levi-Civita connection with respect to the Fisher metric (see \cite{AMN}).
\end{rem}

\begin{defn}\cite{ON3}
Let $(M,g)$ and $(M',g')$ be two connected semi-Riemannian manifolds
of index $s$ and $s'$
respectively, with $0\leq s\leq {\rm dim} M$, $0\leq s'\leq {\rm dim} M'$ and $s'\leq s$. Then a smooth map $\pi:M\rightarrow M'$ which is onto is said to be a \emph{semi-Riemannian
submersion} if the following conditions are satisfied:

(i) $\pi_{*|p}: T_pM \rightarrow T_{\pi(p)}M'$ is onto for all $p\in
M$;

(ii) The fibres $\pi^{-1}(p'),\ p'\in M'$, are semi-Riemannian
submanifolds of $M$;

(iii) $\pi_*$ preserves scalar products of vectors normal to fibres.
\end{defn}

As usual we call the vectors tangent to fibres as \emph{vertical} vectors and those normal
to fibres as \emph{horizontal} vectors. We denote by $\mathcal{V}$ the
vertical distribution, by $\mathcal{H}$ the horizontal distribution
and by $v$ and $h$ the vertical and horizontal projection. An
horizontal vector field $X$ on $M$ is said to be \emph{basic} if $X$ is
$\pi$-related to a vector field $X'$ on $M'$.
It is clear that every
vector field $X'$ on $M'$ has a unique horizontal lift $X$ to $M$
and $X$ is basic. Note that the basic vector fields locally span the horizontal distribution. Moreover, if $X$ and $Y$ are basic vector fields on $M$, $\pi$-related to $X'$ and
$Y'$ on $M'$, then we have the following properties \cite{FIP,ON3,SAH}:

(i) $g(X,Y)=g'(X',Y')\circ\pi$;

(ii) $h[X,Y]$ is a basic vector field and
\[\pi_*h[X,Y]=[X',Y']\circ\pi.\]

(iii) For any vertical vector field $V$, $[X,V]$ is vertical.

Next, let $(M,\nabla,g)$ be a statistical manifold, $(M',g')$ a semi-Riemannian manifold and let $\pi:M\rightarrow M'$ be a semi-Riemannian
submersion. Then we denote by $\widehat{\nabla}$ and $\widehat{\nabla}^*$  the affine connections induced on fibres by the dual connections $\nabla$ and $\nabla^*$ from $M$. We remark that $\widehat{\nabla}$ and $\widehat{\nabla}^*$  may be defined as
\begin{equation}\label{3}
\widehat{\nabla}_UV=v\nabla_UV,\ \widehat{\nabla}^*_UV=v\nabla^*_UV
\end{equation}
for all $U,V\in\Gamma(\mathcal{V})$. Moreover, it follows immediately that the connections $\widehat{\nabla}$ and $\widehat{\nabla}^*$ are torsion free and conjugate to each other with respect to the induced metric on fibres. On the other hand, if we define
\[K:=\nabla-\nabla^*,\]
then $K$ is symmetric, and the following property holds \cite{TAK2}
\begin{eqnarray}\label{4}
2g(\nabla_XY,Z)&=&g(K_XY,Z)+Xg(Y,Z)+Yg(Z,X)-Zg(X,Y)\nonumber\\
&&-g(X,[Y,Z])+g(Y,[Z,X])+g(Z,[X,Y])
\end{eqnarray}
for all $X,Y,Z\in\Gamma(\mathcal{H})$.

Similarly, if $\nabla'$ and $\nabla'^*$ are affine connections on $M'$, then we can define
\[K'=\nabla'-\nabla'^*\]
and we have that $hK_XY$ is basic and $\pi$-related to $K'_{X'}Y'$ if and only if $h\nabla_XY$, respectively  $h\nabla^*_XY$, is basic and $\pi$-related to $\nabla'_{X'}Y'$, respectively $\nabla^*_{X'}Y'$.

\begin{defn} \cite{TAK4}
Let $(M,\nabla,g)$ and $(M',\nabla',g')$ be two statistical manifolds. Then a semi-Riemannian submersion $\pi:M\rightarrow M'$ is said to be a \emph{statistical submersion} if
\[
\pi_{*}(\nabla_XY)_{p}=(\nabla'_{X'}Y')_{\pi(p)}
\]
for all basic vector fields $X,Y$ on $M$ which are $\pi$-related to $X'$ and
$Y'$ on $M'$, and $p\in M$.
\end{defn}

We note that if $\pi:M\rightarrow M'$ is a statistical submersion, then any fiber is a statistical manifold \cite{AH,TAK2,TAK4}.
Moreover, we can define as
well as in the case of classical pseudo-Riemannian geometry, two (1,2) tensor
fields $T$ and $A$ on $M$, by the formulas
\begin{equation}\label{5}
       T(E,F)=T_EF=h\nabla_{vE}vF+v\nabla_{vE}hF
       \end{equation}
and
\begin{equation}\label{6}
       A(E,F)=A_EF=v\nabla_{hE}hF+h\nabla_{hE}vF,
       \end{equation}
for $E,F\in \Gamma(TM)$.

Similarly, we can also define the tensor fields $T^*$ and $A^*$ on $M$ by replacing $\nabla$ by $\nabla^*$ in equations \eqref{5} and \eqref{6}.
Using the above definitions one can easily prove the following result.
\begin{lem}\cite{AH}
The tensor fields $T$, $A$, $T^*$ and $A^*$ have the following properties:
\begin{equation}\label{7}
       T_UV=T_VU,\ T^*_UV=T^*_VU,
       \end{equation}
\begin{equation}\label{8}
        A_XY-A_YX=A^*_XY-A^*_YX=v[X,Y],
       \end{equation}
\begin{equation}\label{9}
       A_XY=-A^*_YX,
       \end{equation}
\begin{equation}\label{10}
        \nabla_XY=h\nabla_XY+A_XY,\ \nabla^*_XY=h\nabla^*_XY+A^*_XY,
       \end{equation}
\begin{equation}\label{11}
        \nabla_UV=T_UV+\widehat{\nabla}_UV,\ \nabla^*_UV=T^*_UV+\widehat{\nabla}^*_UV,
       \end{equation}
\begin{equation}\label{12}
        \nabla_UX=h\nabla_UX+T_UX,\ \nabla^*_UX=h\nabla^*_UX+T^*_UX,
       \end{equation}
\begin{equation}\label{13}
        \nabla_XU=A_XU+v\nabla_XU,\ \nabla^*_XU=A^*_XU+v\nabla^*_XU,
       \end{equation}
\begin{equation}\label{14}
        g(T_UV,X)=-g(V,T^*_UX),
       \end{equation}
\begin{equation}\label{15}
        g(A_XY,U)=-g(Y,A^*_XU),
       \end{equation}
for all $X,Y\in\Gamma(\mathcal{H})$ and $U,V\in\Gamma(\mathcal{V})$.
\end{lem}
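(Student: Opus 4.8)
The plan is to establish each of the identities \eqref{7}--\eqref{15} directly from the defining formulas \eqref{5} and \eqref{6} for $T$ and $A$ (together with their starred analogues), using the fundamental properties of the submersion and the duality relation \eqref{1}. These are essentially bookkeeping computations, so I would organize them by the type of argument required.

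First I would dispatch the symmetry and skew-symmetry relations. For \eqref{7}, note that for vertical $U,V$ we have $T_UV=h\nabla_UV$ and $T_VU=h\nabla_VU$, so $T_UV-T_VU=h\nabla_UV-h\nabla_VU=h[U,V]$; since $\nabla$ is torsion-free and the vertical distribution is integrable (its integral manifolds are the fibers), $[U,V]$ is vertical and $h[U,V]=0$, giving the symmetry. The same argument with $\nabla^*$ yields the symmetry of $T^*$. For \eqref{8}, for horizontal $X,Y$ we have $A_XY=v\nabla_XY$, so $A_XY-A_YX=v(\nabla_XY-\nabla_YX)=v[X,Y]$ by torsion-freeness of $\nabla$, and identically for $A^*$ using $\nabla^*$; both equal $v[X,Y]$.

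Next I would treat the decomposition formulas \eqref{10}--\eqref{13}. Each is obtained by splitting $\nabla_EF$ (or $\nabla^*_EF$) into horizontal and vertical parts and identifying the mixed pieces with $A$ or $T$ according to \eqref{5}--\eqref{6}: for horizontal $X,Y$, $\nabla_XY=h\nabla_XY+v\nabla_XY=h\nabla_XY+A_XY$, giving \eqref{10}; for vertical $U,V$, $\nabla_UV=v\nabla_UV+h\nabla_UV=\widehat{\nabla}_UV+T_UV$ using \eqref{3}, giving \eqref{11}; and \eqref{12}, \eqref{13} come from the mixed cases $\nabla_UX$ and $\nabla_XU$, reading off which component is $T$ or $A$. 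The starred versions are identical with $\nabla^*$ in place of $\nabla$.

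The main obstacle, and the part requiring genuine use of duality rather than mere projection bookkeeping, is \eqref{9} together with \eqref{14} and \eqref{15}. For these I would apply the conjugacy relation \eqref{1} to well-chosen triples and exploit that $g$ pairs horizontal with horizontal and vertical with vertical (mixed pairings vanish). For instance, to prove \eqref{14} I would expand $Ug(V,X)$ with $V$ vertical and $X$ horizontal: since $g(V,X)=0$, the left side vanishes, and \eqref{1} gives $0=g(\nabla_UV,X)+g(V,\nabla^*_UX)$; taking horizontal and vertical components via \eqref{11} and \eqref{12} and discarding the orthogonal pieces leaves $g(T_UV,X)=-g(V,T^*_UX)$. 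The same technique applied to $Xg(Y,U)=0$ produces \eqref{15}, and applying it to $Xg(Y,U)=0$ from the two dual directions, or combining \eqref{8} with \eqref{15}, yields the antisymmetry \eqref{9} relating $A$ and $A^*$; one must be careful here to track which of the four tensors appears in each term so that the signs come out correctly.
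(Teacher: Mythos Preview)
The paper does not prove this lemma at all: it is quoted from \cite{AH} with the remark that it follows easily from the definitions, so there is no argument to compare against. Your outline is the standard verification and is correct for \eqref{7}, \eqref{8}, \eqref{10}--\eqref{13}, \eqref{14}, and \eqref{15}.

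The one place where your sketch is too loose is \eqref{9}. Neither ``applying $Xg(Y,U)=0$ from the two dual directions'' nor ``combining \eqref{8} with \eqref{15}'' suffices by itself: those manipulations only relate $A_XY$ to $A^*_XU$ (and the symmetric variants), never to $A^*_YX$. What is actually needed is the Koszul-type identity \eqref{4} from the paper (equivalently, $\nabla+\nabla^*=2\nabla^0$ together with the classical O'Neill fact $v\nabla^0_XY=\tfrac12 v[X,Y]$ for basic $X,Y$). Concretely, torsion-freeness of $\nabla^*$ gives $A^*_YX=A^*_XY-v[X,Y]$, so \eqref{9} is equivalent to $v(\nabla_XY+\nabla^*_XY)=v[X,Y]$; this is exactly $2v\nabla^0_XY=v[X,Y]$, which you obtain from \eqref{4} (or the Koszul formula) using that $g(X,Y)$ is constant along fibres and that $[X,U]$, $[Y,U]$ are vertical for basic $X,Y$. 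Once you insert this step, the argument for \eqref{9} is complete.
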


We recall that if $T_UV=0$, for all $U,V\in\Gamma(\mathcal{V})$, then $\pi$ is said to be a statistical submersion \emph{with isometric fibers} \cite{TAK2}.

\section{Statistical manifolds with almost product structures}

An \emph{almost product structure} on a smooth manifold $M$ is a tensor
field $P$ of type (1,1) on $M$, $P\neq\pm {\rm Id}$, such that:
\begin{equation}\label{16}
         P^2={\rm Id},
\end{equation}
where ${\rm Id}$ is the identity tensor field of
type $(1,1)$ on $M$. The pair $(M,P)$ is called an \emph{almost product manifold}. An \emph{almost para-complex manifold} is an almost product manifold $(M,P)$ such that the two eigenbundles $T^+M$ and $T^-M$ associated
with the two eigenvalues $+1$ and $-1$ of $P$, respectively, have the
same rank. A nice survey of recent results on para-complex geometry is included in \cite{AMT}.

An \emph{almost para-Hermitian structure} on a smooth manifold $M$
is a pair $(P,g)$, where $P$ is an almost product structure on $M$
and $g$ is a semi-Riemannian metric on $M$ satisfying
\begin{equation}\label{17}
         g(PX,PY)=-g(X,Y),
         \end{equation}
for all vector fields $X$,$Y$ on $M$.

In this case, $(M,P,g)$ is said to be an \emph{almost para-Hermitian
manifold}. It is easy to see that the dimension of $M$ is even and
the metric $g$ is neutral. Moreover, if $\nabla P=0$ then $(M,P,g)$ is said to be a
\emph{para-K\"{a}hler manifold} \cite{CHEN}.

\begin{rem}
 If $(M,g)$ is a semi-Riemannian manifold endowed with an almost product structure $P$, then we denote by $P^*$ 
 the tensor field of type $(1,1)$ on $M$  satisfying
\begin{equation}\label{18}
         g(P E,F)+g(E,P^* F)=0,
\end{equation}
for all vector fields $E,F$ on $M$.
Obviously $P^*$ is the negative of the adjoint of $P$ and hence exists.
Next, we will call the triple $(M,P,g)$ as an \emph{almost para-Hermitian-like manifold}, motivated by the fact that in the particular case when $P^*=P$, the condition \eqref{18} reduces to \eqref{17}.
\end{rem}

\begin{defn}
Let  $(M,P,g)$ be an almost para-Hermitian-like manifold. If $(\nabla,g)$ is a statistical structure on $M$ such that $P$ is parallel with respect to $\nabla$, then $(M,\nabla,P,g)$ is said to be a \emph{para-K\"{a}hler-like statistical manifold}.
\end{defn}

\begin{rem}\label{R1}
The concepts of almost para-Hermitian-like  manifold and para-K\"{a}hler-like statistical manifold generalize the notions of almost para-Hermitian manifold and para-K\"{a}hler manifold, respectively. It is clear that in the particular case of para-K\"{a}hler manifolds we have $P=P^*$ and $\nabla$ is the Levi-Civita connection of the metric $g$.
\end{rem}

\begin{prop}\label{P1}
Let $(M,\nabla,P,g)$ be an almost para-Hermitian-like manifold. Then:
\begin{enumerate}
  \item[i.] $P^*$ is an almost product structure;
  \item[ii.] For all vector fields $E,F$ on $M$, the following formula holds
   \begin{equation}\label{19}
         g(P E,P^*F)=-g(E,F);
\end{equation}
  \item[iii.] $(P^*)^*=P$;
  \item[iv.] If $(\nabla,g)$ is a statistical structure on $M$, then $P$ is parallel with respect to $\nabla$ if and only if $P^*$ is parallel with respect to the conjugate affine connection $\nabla^*$ of $\nabla$.
\end{enumerate}
\end{prop}
\begin{proof}
i. Using \eqref{18} and taking into account that $P$ is an almost product structure, we obtain
\begin{eqnarray}
g((P^*)^2E,F)&=&-g(P^*E,PF)=g(E,P^2F)\nonumber\\
&=&g(E,F),\nonumber
\end{eqnarray}
for all vector fields $E,F$ on $M$. Now, we deduce immediately that $P^*$ is also an almost product structure. \\
ii. The relation \eqref{19} follows immediately from i.\\
iii. Suppose that $(P^*)^*=P'$. Then we have
\begin{equation}\label{21}
         g(P^* E,F)+g(E,P' F)=0.
\end{equation}
From \eqref{18} and \eqref{21} it follows that
\[
g(E,(P'-P)F)=0
\]
for all vector fields $E,F$ on $M$. Therefore we conclude that $P'=P$.\\
iv. Using \eqref{18} and the definition of the conjugate connection $\nabla^*$ of $\nabla$, we obtain
\begin{eqnarray}
g((\nabla_EP)F,G)&=&g(\nabla_EPF,G)-g(P\nabla_EF,G)\nonumber\\
&=&Eg(PF,G)-g(PF,\nabla^*_EG)+g(\nabla_EF,P^*G)\nonumber\\
&=&Eg(PF,G)+g(F,P^*\nabla^*_EG)+Eg(F,P^*G)-g(F,\nabla^*_EP^*G)\nonumber\\
&=&g(F,P^*\nabla^*_EG)-g(F,\nabla^*_EP^*G)\nonumber\\
&=&-g(F,(\nabla^{*}_{E}P^*)G),\nonumber
\end{eqnarray}
for all vector fields $E,F,G$ on $M$.
Hence we derive that $P$ is parallel with respect to $\nabla$ if and only if $P^*$ is parallel with respect to $\nabla^*$.
\end{proof}

From Proposition \ref{P1} we deduce immediately the following result.
\begin{cor}  $(M,\nabla,P,g)$ is a para-K\"{a}hler-like statistical manifold if and only if so is $(M,\nabla^*,P^*,g)$.
\end{cor}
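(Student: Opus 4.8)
The plan is to derive the corollary as an immediate consequence of the four parts of Proposition \ref{P1}, so the task reduces to assembling those pieces correctly rather than starting a fresh computation. I would begin by recalling that $(M,\nabla,P,g)$ being a para-K\"{a}hler-like statistical manifold means, by definition, two things: first that $(M,P,g)$ is an almost para-Hermitian-like manifold (so $P$ is an almost product structure and $P^*$ is defined via \eqref{18}), and second that $(\nabla,g)$ is a statistical structure on $M$ with $\nabla P=0$.

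Next I would verify that the candidate triple $(M,\nabla^*,P^*,g)$ satisfies every clause of the definition. For the almost para-Hermitian-like condition, part i of Proposition \ref{P1} shows $P^*$ is itself an almost product structure, and part iii gives $(P^*)^*=P$, so the compatibility relation \eqref{18} holds with the roles of $P$ and $P^*$ interchanged; thus $(M,P^*,g)$ is again an almost para-Hermitian-like manifold, with the associated adjoint being $P$. For the statistical-structure condition, I would invoke the standard fact recorded in the preliminaries that the conjugate $(M,\nabla^*,g)$ of a statistical manifold is again a statistical manifold, so $(\nabla^*,g)$ is a statistical structure. Finally, for the parallelism condition, part iv states precisely that $\nabla P=0$ if and only if $\nabla^*P^*=0$, which supplies $\nabla^*P^*=0$ from the hypothesis $\nabla P=0$.

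Having checked all clauses, I would conclude that $(M,\nabla^*,P^*,g)$ is a para-K\"{a}hler-like statistical manifold. The converse direction follows by the symmetry of the statement together with $(P^*)^*=P$ and $(\nabla^*)^*=\nabla$: applying the forward implication to $(M,\nabla^*,P^*,g)$ returns $(M,\nabla,P,g)$, which establishes the equivalence. There is no genuine obstacle here, since every ingredient has already been proved; the only point requiring minor care is to confirm that the \emph{almost para-Hermitian-like} part of the definition transfers, and for that the combination of parts i and iii is exactly what is needed. I would therefore phrase the proof as a short enumeration matching the clauses of the definition to parts i, iii, and iv of Proposition \ref{P1} plus the duality of statistical structures.
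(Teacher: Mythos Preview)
Your proposal is correct and matches the paper's approach exactly: the paper simply states that the corollary follows immediately from Proposition~\ref{P1}, and what you have written is precisely the unpacking of that remark, invoking parts i, iii, and iv of the proposition together with the duality of statistical structures from the preliminaries. There is nothing to add or change.
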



\begin{defn}
Let $(M,\nabla,P,g)$ be a para-K\"{a}hler-like statistical manifold.
If the curvature tensor $R$ of the connection $\nabla$ satisfies
\begin{eqnarray}\label{22}
R(E,F)G&=&\frac{c}{4}\lbrace g(F,G)E-g(E,G)F\nonumber\\
        &+&
        g(PF,G)PE-g(PE,G)PF\nonumber\\
        &+&
        [g(E,PF)-g(PE,F)]PG\rbrace,
\end{eqnarray}
for all vector fields $E,F,G$ on $M$, where $c$ is a real constant, then $(M,\nabla,P,g)$  is said to be a statistical manifold \emph{of type para-K\"{a}hler space form}.
\end{defn}

We remark that replacing $P$ by $P^*$ in the right-hand side of the equation \eqref{22}, we get the expression of the curvature tensor $R^*$ with respect to the dual connection $\nabla^*$. We note that the above definition generalizes the concept of para-K\"{a}hler space form; in fact, if $(M,P,g)$ is a para-K\"{a}hler manifold satisfying \eqref{22}, then it follows immediately that $M$ is a \emph{para-K\"{a}hler manifold of constant para-sectional curvature}, also called \emph{para-K\"{a}hler space form} (see, e.g., \cite{CHEN2011}). Notice that one has
a rich family of para-K\"{a}hler space forms (see \cite{GM}).


\begin{thm}\label{TK}
Let  $(M,\nabla,P,g)$  be a para-K\"{a}hler-like statistical manifold of dimension $n\neq 2$. If $M$ is a space of constant curvature in the Kurose's sense, then the statistical structure $(\nabla,g)$ is a Hessian structure.
\end{thm}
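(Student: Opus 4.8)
The plan is to reduce everything to showing that the Kurose constant $k$ appearing in \eqref{scc} must vanish: once $k=0$, formula \eqref{scc} yields $R\equiv 0$, which is exactly the statement that $(\nabla,g)$ is a Hessian structure. So I would argue by contradiction and assume $k\neq 0$.

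The structural input I would exploit is that $P$ is $\nabla$-parallel. Differentiating $\nabla_E(PG)=P\nabla_EG$ and comparing second covariant derivatives shows that the curvature operator commutes with $P$, that is $R(E,F)(PG)=P\big(R(E,F)G\big)$ for all $E,F,G$. I would then substitute Kurose's expression \eqref{scc} into both sides: the left-hand side becomes $k\{g(F,PG)E-g(E,PG)F\}$, while the right-hand side becomes $k\{g(F,G)PE-g(E,G)PF\}$. Dividing by $k\neq 0$ leaves the purely pointwise algebraic identity
\[
g(F,PG)E-g(E,PG)F=g(F,G)PE-g(E,G)PF,
\]
valid for all tangent vectors $E,F,G$ at every point of $M$.

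The heart of the argument is then to show that this identity forces $P=\pm\mathrm{Id}$, contradicting the defining hypothesis $P\neq\pm\mathrm{Id}$ of an almost product structure. This is where the dimension assumption $n\neq 2$ enters. Fix a non-null vector $G$; if $G$ and $PG$ were linearly independent, then since $n\geq 3$ the two linear conditions $g(F,G)=g(F,PG)=0$ admit a nonzero solution $F$. Feeding such an $F$ into the identity reduces it to $g(E,PG)F=g(E,G)PF$, and restricting to $E$ with $g(E,G)=0$ gives $g(E,PG)=0$ for every $E\in G^{\perp}$. Non-degeneracy of $g$ together with $G$ non-null then forces $PG\in\mathbb{R}G$, contradicting independence. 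Hence $PG$ is proportional to $G$ for every non-null $G$, and a short linearity argument (using that non-null vectors span $T_pM$, that $P$ is linear, and that $P^2=\mathrm{Id}$) upgrades this to $P=\pm\mathrm{Id}$ on the whole tangent space.

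I expect the main obstacle to be the bookkeeping forced by the semi-Riemannian, rather than positive-definite, setting: one must ensure that the auxiliary vector $G$ is chosen non-null, that the orthogonal complement $G^\perp$ behaves as in the non-degenerate case, and that the eigenvalue $\pm 1$ is genuinely constant across the possibly disconnected set of non-null directions. None of these steps is deep, but each requires the non-degeneracy of $g$ and the hypothesis $n\neq 2$ to be invoked explicitly; the case $n=2$ must be excluded precisely because there the system $g(F,G)=g(F,PG)=0$ has only the trivial solution and the key reduction step collapses.
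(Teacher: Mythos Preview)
Your argument is correct and begins exactly as the paper does: from $\nabla P=0$ one gets $R(E,F)PG=PR(E,F)G$, and substituting \eqref{scc} with $k\neq 0$ yields the same algebraic identity $g(F,PG)E-g(E,PG)F=g(F,G)PE-g(E,G)PF$. From this point on the two proofs diverge. The paper contracts the identity against an orthonormal frame $\{e_i\}$ by setting $F=G=e_i$ and summing; after a short manipulation using \eqref{18} this produces $(n-2)[g(PE,G')-g(E,PG')]=0$, so $n\neq 2$ forces $P$ to be $g$-symmetric, and then a second contraction gives $PE=\frac{\operatorname{trace}P}{n}E$, hence $P=\pm\mathrm{Id}$. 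You instead argue pointwise and geometrically: pick a non-null $G$, use $n\geq 3$ to find $F\neq 0$ orthogonal to both $G$ and $PG$, and read off $PG\in\mathbb{R}G$; then a linearity/density argument pins down the common eigenvalue.

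Both routes are valid. The trace argument in the paper is uniform and avoids any case analysis on null versus non-null vectors, at the price of introducing an orthonormal frame and doing some index bookkeeping. Your approach is more elementary and conceptually transparent---it explains geometrically \emph{why} $n=2$ is exceptional---but, as you correctly note, it requires explicit care with the semi-Riemannian peculiarities (choosing non-null $G$, checking $(G^\perp)^\perp=\mathbb{R}G$, and propagating the eigenvalue across the light cone). Those details are routine and your outline handles them appropriately.
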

\begin{proof}
Because $M$ is a space of constant curvature in the Kurose's sense, it follows that the curvature tensor field $R$ with
respect to the affine connection $\nabla$ is given by \eqref{scc}. On the other hand, because $P$ is parallel with respect to $\nabla$, it is obvious that
\[
R(E,F)PG=PR(E,F)G,
\]
for all vector fields $E,F,G$ on $M$. Hence we deduce
\begin{equation}\label{pro1}
g(R(E,F)PG,G')=g(PR(E,F)G,G')
\end{equation}
for all vector fields $E,F,G$ on $M$.

Using now \eqref{scc} in \eqref{pro1}, we derive
\begin{eqnarray}\label{pro2}
k\{g(F,PG)g(E,G')-g(E,PG)g(F,G')\}\nonumber\\
=
k\{g(F,G)g(PE,G')-g(E,G)g(PF,G')\}.
\end{eqnarray}

We assume now that $k\neq 0$ in \eqref{scc}. Then we take an orthonormal tangent frame $\{e_1,\ldots,e_n\}$ such that $g(e_i,e_j)=\epsilon_i\delta_{ij}$ on $M$ and replace $F=G=e_i$ in \eqref{pro2}. By summing over $1\leq i\leq n$ and taking account of \eqref{18}, we deduce
\begin{eqnarray}\label{pro3}
({\rm trace\ }P) g(E,G')+g\left(\sum_{i=1}^n\epsilon_i g(P^*E,e_i)e_i,G'\right)\nonumber\\
=
n g(PE,G')+g\left(E,\sum_{i=1}^n\epsilon_i g(e_i,P^*G')e_i\right).\nonumber
\end{eqnarray}

Thus we have
\begin{equation}\label{pro4}
({\rm trace\ }P) g(E,G')+g(P^*E,G')=n g(PE,G')+g(E,P^*G').
\end{equation}

Using now \eqref{18} in \eqref{pro4}, we obtain
\begin{equation}\label{pro5}
({\rm trace\ }P) g(E,G')-g(E,PG')-(n-1) g(PE,G')=0,
\end{equation}
for all vector fields $E,G'$ on $M$.

Replacing in \eqref{pro5} $E$ by $G'$ and $G'$ by $E$, we derive
\begin{equation}\label{pro6}
({\rm trace\ }P) g(E,G')-g(PE,G')-(n-1) g(E,PG')=0.
\end{equation}

By subtracting \eqref{pro5} from \eqref{pro6}, we deduce
\[
(n-2)[g(PE,G')-g(E,PG')]=0
\]
and taking into account that $n\neq 2$, we get
\begin{equation}\label{pro7}
g(PE,G')=g(E,PG').
\end{equation}

From \eqref{pro5} and \eqref{pro7}, we derive
\begin{equation}\label{pro8}
({\rm trace\ }P) g(E,G')-ng(PE,G')=0,
\end{equation}
for all vector fields $E,G'$ on $M$.

Now, it follows immediately from \eqref{pro8} that
\begin{equation}\label{pro9}
PE=\frac{{\rm trace\ }P}{n}E
\end{equation}
and therefore we derive
\begin{equation}\label{pro10}
P^2E=\left(\frac{{\rm trace\ }P}{n}\right)^2E.
\end{equation}

But $P$ is an almost product structure, so we deduce from \eqref{pro10} that
\begin{equation}\label{pro11}
{\rm trace\ }P=\pm n.
\end{equation}

From \eqref{pro9} and \eqref{pro11}, we derive that $P=\pm{\rm Id}$. This is a contradiction because $P$ is an almost product structure and, by definition, $P\neq\pm{\rm Id}$.

Hence the assumption $k\neq 0$ in \eqref{scc} is false and we conclude that
the curvature tensor field $R$ with
respect to the affine connection $\nabla$ identically vanishes. Therefore we deduce that $(\nabla,g)$ is a Hessian structure.
\end{proof}


\section{Semi-Riemannian submersions between para-K\"{a}hler-like statistical manifolds}

\begin{defn} \cite{SCHAF}
Let  $(M,P,g)$ and $(M',P',g')$ be two almost para-Hermitian-like manifolds. Then a smooth map $f:M\rightarrow M'$ is said to be a \emph{para-holomorphic map} if
\begin{equation}\label{23BISBB}
        f_*\circ P=P'\circ f_*.
\end{equation}
\end{defn}

\begin{defn}
Let $(M,P,g)$ and $(M',P',g')$ be two almost para-Hermitian-like manifolds. Suppose that $(\nabla,g)$ and $(\nabla',g')$ are statistical structures on  $M$ and $M'$, respectively. Then:\\
i. A statistical submersion $\pi:M\rightarrow M'$ which is a para-holomorphic map is called an \emph{almost para-Hermitian-like statistical submersion}.\\
ii. If $(M,\nabla,P,g)$ is a para-K\"{a}hler-like statistical manifold, then an almost para-Hermitian-like statistical submersion $\pi:M\rightarrow M'$  is called a \emph{para-K\"{a}hler-like statistical submersion}.
\end{defn}

\begin{prop}\label{P111}
Let $\pi:M\rightarrow M'$ be an almost para-Hermitian-like statistical submersion. Then:

i. $\mathcal{V}$ and $\mathcal{H}$ are
invariant under the action of $P$.

ii. $P$ and $P^*$
commute with the horizontal and vertical projectors.

iii. If $X$ is a basic vector field on $M$ $\pi$-related to $X'$ on $M'$, then $P X$ (resp. $P^* X$) is a basic vector field $\pi$-related to $P' X'$ (resp. $P'^* X'$) on $M'$.
\end{prop}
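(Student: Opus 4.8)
The plan is to prove the three assertions in sequence, using only the para-holomorphic condition \eqref{23BISBB}, the almost-product relations $P^2=\mathrm{Id}$, $(P^*)^2=\mathrm{Id}$ from Proposition~\ref{P1}, the adjoint relation \eqref{18}, and the properties of basic vector fields listed in the Preliminaries. I would begin with assertion~i, since assertions ii and iii follow naturally once the invariance of $\mathcal{V}$ under $P$ is established.

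For part~i, first I would show that $P$ maps vertical vectors to vertical vectors. The key observation is that $\pi$ para-holomorphic means $\pi_*\circ P=P'\circ\pi_*$, so for any vertical vector $V$ (i.e. $\pi_*V=0$) we get $\pi_*(PV)=P'(\pi_*V)=P'(0)=0$, hence $PV$ is vertical. This shows $P(\mathcal{V})\subseteq\mathcal{V}$, and since $P^2=\mathrm{Id}$ implies $P$ is an involution, equality $P(\mathcal{V})=\mathcal{V}$ follows. The invariance of $\mathcal{H}$ then comes from the fact that $\mathcal{H}=\mathcal{V}^{\perp}$ together with relation \eqref{19}, $g(PE,P^*F)=-g(E,F)$: for horizontal $X$ and vertical $V$, I would compute $g(PX,V)$ and show it vanishes, using that $P^*V$ is vertical (which requires knowing $P^*$ also preserves $\mathcal{V}$). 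Thus I should first establish the analogous invariance for $P^*$; this is most cleanly done by noting that $\pi$ para-holomorphic for $P$ forces the dual submersion to be para-holomorphic for $P^*$, or directly via \eqref{18}: for $V$ vertical and $X$ horizontal, $g(P^*V,X)=-g(V,PX)$, and since $PX$ will be shown horizontal, the vertical component vanishes—so the two invariances must be bootstrapped together, which is the main subtlety.

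The cleanest route around this bootstrapping obstacle is to prove $P^*(\mathcal{V})\subseteq\mathcal{V}$ independently. For vertical $V$ and arbitrary horizontal $X$, using \eqref{18} gives $g(P^*V,X)=-g(V,PX)$; but I do not yet know $PX$ is horizontal. Instead I would argue directly: for any horizontal basic $X$, $\pi$-related to $X'$, the identity $g(PX,V)=-g(X,P^*V)$ combined with the para-holomorphic argument applied to $P^*$ (obtained by taking adjoints of \eqref{23BISBB} against $g$ and $g'$) yields that $P^*V$ pairs trivially with every horizontal vector, forcing $P^*V$ vertical. Once both $P(\mathcal{V})=\mathcal{V}$ and $P^*(\mathcal{V})=\mathcal{V}$ hold, the horizontal invariance of both $P$ and $P^*$ follows from orthogonal-complement duality via \eqref{18}, completing part~i.

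For part~ii, invariance of $\mathcal{V}$ and $\mathcal{H}$ under $P$ immediately gives $P\circ v=v\circ P$ and $P\circ h=h\circ P$, because for any $E$ we have $P(vE)\in\mathcal{V}$ and $P(hE)\in\mathcal{H}$, so $P E=P(vE)+P(hE)$ is the decomposition witnessing $v(PE)=P(vE)$ and $h(PE)=P(hE)$; the same argument applies verbatim to $P^*$. For part~iii, if $X$ is basic and $\pi$-related to $X'$, then $PX$ is horizontal by part~i, and $\pi_*(PX)=P'(\pi_*X)=P'X'$ by \eqref{23BISBB}, which is exactly the statement that $PX$ is $\pi$-related to $P'X'$; being horizontal and projectable, $PX$ is basic. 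The identical reasoning with $P^*$ and $P'^*$ (using that $\pi$ is also para-holomorphic with respect to the dual structures, as noted for part~i) handles the case of $P^*X$. The main obstacle throughout is establishing the $P^*$-invariance of $\mathcal{V}$ without circular reliance on the $P$-invariance of $\mathcal{H}$, and I expect the adjoint/duality argument via \eqref{18} to be the decisive ingredient.
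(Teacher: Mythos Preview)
Your overall strategy mirrors the paper's: establish $P(\mathcal{V})\subseteq\mathcal{V}$ from the para-holomorphic condition, then use the duality \eqref{18} to obtain the remaining invariances, after which parts~ii and~iii are immediate exactly as you describe. You are also right to flag the circularity in passing from $P(\mathcal{V})\subseteq\mathcal{V}$ to $P(\mathcal{H})\subseteq\mathcal{H}$ and $P^*(\mathcal{V})\subseteq\mathcal{V}$; the paper hides precisely this issue behind the word ``Similarly'' when it asserts $P^*(\mathcal{V})\subseteq\mathcal{V}$ without further justification.

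However, your proposed fix does not close the gap. Taking adjoints of $\pi_*\circ P=P'\circ\pi_*$ with respect to $g$ and $g'$ yields
\[
P^*\circ(\pi_*)^{\mathrm{adj}}=(\pi_*)^{\mathrm{adj}}\circ P'^{*},
\]
where $(\pi_*)^{\mathrm{adj}}$ is the horizontal lift. This says that $P^*$ sends horizontal lifts to horizontal lifts, i.e.\ $P^*(\mathcal{H})\subseteq\mathcal{H}$---exactly the dual of what you already have---but it gives no information about $P^*$ on $\mathcal{V}$. Put differently: from $P(\mathcal{V})\subseteq\mathcal{V}$ and \eqref{18} one obtains, for horizontal $X$ and vertical $V$, $g(P^*X,V)=-g(X,PV)=0$, hence $P^*(\mathcal{H})\subseteq\mathcal{H}$; but the relation you need, $g(P^*V,X)=-g(V,PX)=0$, would require $PX$ horizontal, which is the very statement in question. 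So the bootstrapping obstacle you correctly diagnosed is not resolved by the adjoint argument, and the paper's ``Similarly'' conceals the same unproved step. What the hypotheses genuinely yield is only the pair $P(\mathcal{V})\subseteq\mathcal{V}$, $P^*(\mathcal{H})\subseteq\mathcal{H}$; the remaining two inclusions (equivalently, the vanishing of the off-diagonal block $\mathcal{H}\to\mathcal{V}$ in $P$) require an additional argument or hypothesis that neither your sketch nor the paper supplies.
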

\begin{proof}
i. Since $\pi$ is a para-holomorphic map, we derive for
any $V\in\Gamma(\mathcal{V})$:
\[
\pi_*P V=P'\pi_*V=0
\]
and thus we conclude that $P(\mathcal{V})\subseteq \mathcal{V}$. Similarly it follows that $P^*(\mathcal{V})\subseteq \mathcal{V}$.

On the other hand, for any $X\in\Gamma(\mathcal{H})$ and
$V\in\Gamma(\mathcal{V})$, we derive
\[
g(P X,V)=-g(X,P^* V)=0
\]
and thus we conclude that $P(\mathcal{H})\subseteq \mathcal{H}$. In a similar way, it follows that $P^*(\mathcal{H})\subseteq \mathcal{H}$.

ii. The statement trivially follows from i.

iii. If $X$ is a basic vector field, then from i. it follows that $P X$ and  $P^*X$ are horizontal vector fields. On the other hand, since  $\pi$ is a para-holomorphic map and $X$ is $\pi$-related to $X'$ on $M'$, we obtain
\[
\pi_*P X=P'\pi_* X=P'X'
\]
and similarly
\[
\pi_*P^* X=P'^*\pi_* X=P'^* X'
\]
and the conclusion is clear.
\end{proof}

\begin{rem}
Let $(M,P,g)$ and $(M',P',g')$ be two almost para-Hermitian-like manifolds. Suppose that $(\nabla,g)$ and $(\nabla',g')$ are statistical structures on  $M$ and $M'$, respectively. Let $\pi:M\rightarrow M'$ be an almost para-Hermitian-like statistical submersion. If $F=\pi^{-1}(p')$ is a fiber of the submersion, where $p'\in M'$, then it is known from \cite{AH,TAK2,TAK4} that $(F,\widehat{\nabla},\widehat{g}=g|_F)$ is a statistical manifold. Moreover, we can define
$
\widehat{P}:=P|_F,
$
and then it follows immediately that $(F,\widehat{P},\widehat{g})$ is an almost para-Hermitian-like manifold. Hence we have the following result.
\end{rem}

\begin{thm}\label{T1}
If $\pi:M\rightarrow M'$ is an almost para-Hermitian-like statistical submersion, then each fiber is an almost para-Hermitian-like manifold endowed with a statistical structure.
\end{thm}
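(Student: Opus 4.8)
The plan is to show that each fiber $F=\pi^{-1}(p')$ carries simultaneously a statistical structure and an almost para-Hermitian-like structure, and that these are compatible in the required sense. The statistical structure on $F$ is already supplied by the cited results \cite{AH,TAK2,TAK4}: the induced connection $\widehat{\nabla}_UV=v\nabla_UV$ from \eqref{3} together with the induced metric $\widehat{g}=g|_F$ makes $(F,\widehat{\nabla},\widehat{g})$ a statistical manifold, with conjugate connection $\widehat{\nabla}^*$. So the genuinely new content is the para-Hermitian-like structure, which I would build from $\widehat{P}:=P|_F$ as indicated in the remark immediately preceding the theorem.

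First I would verify that $\widehat{P}$ is a well-defined tensor field of type $(1,1)$ on $F$, and this is exactly where Proposition \ref{P111}(i) does the work: since $\mathcal{V}$ is invariant under $P$, for any vertical $U$ the vector $PU$ is again vertical, hence tangent to the fiber, so $\widehat{P}$ maps $\Gamma(TF)$ to itself. Next I would check $\widehat{P}^2=\mathrm{Id}$, which is immediate because $P^2=\mathrm{Id}$ on all of $M$ by \eqref{16} and restriction commutes with composition on vertical vectors. I would also note that $\widehat{P}\neq\pm\mathrm{Id}$ in the generic situation, so that $\widehat{P}$ is a genuine almost product structure on $F$.

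Then I would produce the compatible tensor $\widehat{P}^*$ and the para-Hermitian-like relation. The natural candidate is $\widehat{P}^*:=P^*|_F$, which is again vertical-valued by the second half of Proposition \ref{P111}(i). For vertical $U,V$ I would compute
\[
\widehat{g}(\widehat{P}U,V)+\widehat{g}(U,\widehat{P}^*V)=g(PU,V)+g(U,P^*V)=0,
\]
using \eqref{18} on $M$ and the fact that $PU,P^*V$ are vertical so that $\widehat{g}$ and $g$ agree on the relevant arguments. This shows $\widehat{P}^*$ is the tensor adjoint to $\widehat{P}$ in the sense of \eqref{18} on $(F,\widehat{g})$, and by Proposition \ref{P1}(i) applied on $F$ it follows that $\widehat{P}^*$ is also an almost product structure. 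Consequently $(F,\widehat{P},\widehat{g})$ is an almost para-Hermitian-like manifold in the sense of the Remark following \eqref{18}.

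I do not expect a serious obstacle here; the statement is essentially an assembly of facts already established, and the only point requiring care is the interchange of $g$ with $\widehat{g}=g|_F$ in the bilinear identities. The subtlety is that \eqref{18} holds for all vector fields on $M$, but to descend it to $F$ one must confirm that all four vectors appearing ($U$, $V$, $PU$, $P^*V$) are vertical, which is precisely guaranteed by the $P$- and $P^*$-invariance of $\mathcal{V}$ from Proposition \ref{P111}(i); once this is in hand the identity transfers verbatim. Thus the theorem follows, and one could additionally remark that $\widehat{P}$ is parallel with respect to $\widehat{\nabla}$ whenever the submersion enjoys isometric fibers, though that refinement is not needed for the present statement.
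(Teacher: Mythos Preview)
Your proposal is correct and follows essentially the same route as the paper: the paper's entire argument is contained in the remark immediately preceding the theorem, which cites \cite{AH,TAK2,TAK4} for the statistical structure on the fiber and then defines $\widehat{P}:=P|_F$, asserting without further detail that $(F,\widehat{P},\widehat{g})$ is almost para-Hermitian-like. Your write-up is in fact more complete than the paper's, since you explicitly invoke Proposition \ref{P111}(i) to justify that $\widehat{P}$ and $\widehat{P}^*$ are well-defined on the fiber and verify the defining relation \eqref{18}; the only point left tacit in both treatments is the nondegeneracy condition $\widehat{P}\neq\pm\mathrm{Id}$.
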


\begin{thm}\label{T2}
Let  $(M,\nabla,P,g)$ be a para-K\"{a}hler-like statistical manifold and let $(M',P',g')$ be an almost para-Hermitian-like manifold endowed with a statistical structure $(\nabla',g')$.
If $\pi:M\rightarrow M'$ is a para-K\"{a}hler-like statistical submersion, then $(M',\nabla',P',g')$ is a para-K\"{a}hler-like statistical manifold. Moreover, the fibres are also para-K\"{a}hler-like statistical manifolds.
\end{thm}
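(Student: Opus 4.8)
The plan is to prove the two assertions separately, in each case reducing the parallelism downstairs (on $M'$ or on a fibre $F$) to the hypothesis $\nabla P=0$ upstairs, exploiting that $\pi$ is simultaneously a statistical submersion and a para-holomorphic map.

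For the base space, I would fix basic vector fields $X,Y$ on $M$, $\pi$-related to $X',Y'$ on $M'$, and aim to show $(\nabla'_{X'}P')Y'=0$. The crucial input is Proposition \ref{P111}(iii), which guarantees that $PY$ is again basic and $\pi$-related to $P'Y'$. Applying the statistical-submersion identity $\pi_*(\nabla_X(\cdot))=\nabla'_{X'}\pi_*(\cdot)$ to the basic field $PY$ gives $\nabla'_{X'}(P'Y')=\pi_*(\nabla_X(PY))$. Now $\nabla P=0$ yields $\nabla_X(PY)=P(\nabla_XY)$, and para-holomorphicity $\pi_*\circ P=P'\circ\pi_*$ converts this into $P'(\pi_*\nabla_XY)=P'(\nabla'_{X'}Y')$. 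Hence $\nabla'_{X'}(P'Y')=P'(\nabla'_{X'}Y')$, i.e. $(\nabla'_{X'}P')Y'=0$. Since every pair of vector fields on $M'$ admits basic horizontal lifts, this holds for all $X',Y'$, so $\nabla'P'=0$ and $(M',\nabla',P',g')$ is para-K\"{a}hler-like.

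For the fibres, let $F=\pi^{-1}(p')$, with $\widehat{\nabla}_UV=v\nabla_UV$ and $\widehat{P}=P|_F$. By Proposition \ref{P111}(i) the distributions $\mathcal{V}$ and $\mathcal{H}$ are $P$-invariant, so for vertical $U,V$ the field $PV$ is vertical and $\widehat{P}V=PV$. Then $\widehat{\nabla}_U(\widehat{P}V)=v\nabla_U(PV)=v\,P(\nabla_UV)$ by $\nabla P=0$. Decomposing via \eqref{11} as $\nabla_UV=\widehat{\nabla}_UV+T_UV$ (a vertical plus a horizontal term) and using that $P$ preserves each summand, the horizontal part $P(T_UV)$ is annihilated by $v$ while $P(\widehat{\nabla}_UV)$ remains vertical; hence $v\,P(\nabla_UV)=P(\widehat{\nabla}_UV)=\widehat{P}(\widehat{\nabla}_UV)$. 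Therefore $(\widehat{\nabla}_U\widehat{P})V=0$, and by Theorem \ref{T1} together with this parallelism each fibre is a para-K\"{a}hler-like statistical manifold.

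Neither part presents a genuine obstacle once Proposition \ref{P111} is available; the only points requiring care are keeping the auxiliary fields basic on the base, so that the statistical-submersion identity may be invoked, and respecting the $P$-invariance of the splitting on the fibres, so that the vertical projection commutes through $P$. One could alternatively derive the dual statement for $(\nabla^*,P^*)$ by the symmetric computation and the Corollary, but the direct argument above suffices.
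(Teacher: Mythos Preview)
Your proof is correct and follows essentially the same approach as the paper: for the base you use Proposition~\ref{P111}(iii) together with the statistical-submersion identity and para-holomorphicity to obtain $(\nabla'_{X'}P')Y'=\pi_*((\nabla_XP)Y)=0$, and for the fibres you use the $P$-invariance of $\mathcal{V}$ and $\mathcal{H}$ together with the decomposition~\eqref{11} to separate the vertical identity $(\widehat{\nabla}_U\widehat{P})V=0$ from the horizontal one $T_U\widehat{P}V=PT_UV$. The paper's computation differs only cosmetically, writing out the full chain $(\nabla'_{X'}P')Y'=\pi_*((\nabla_XP)Y)$ as a single equation~\eqref{23} and packaging the fibre argument as the single identity~\eqref{26}.
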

\begin{proof}

If $X,Y$ are basic vector fields on $M$ $\pi$-related to $X',Y'$ on $M'$, then using Proposition \ref{P111} we obtain
    \begin{eqnarray}\label{23}
    (\nabla'_{X'}P')Y'&=&\nabla'_{X'}(P'Y')-P'(\nabla'_{X'}Y')\nonumber\\
    &&=\nabla'_{\pi_*X}(\pi_*(P Y))-P'\pi_*(\nabla_X Y)\nonumber\\
    &&=\pi_*(\nabla_X(P Y))-\pi_*(P(\nabla_X Y))\nonumber\\
    &&=\pi_*((\nabla_X P)Y).
    \end{eqnarray}

Now, because $(M,\nabla,P,g)$ is a para-K\"{a}hler-like statistical manifold, we have
that $P$ is parallel with respect to $\nabla$ and then from \eqref{23} we derive that $P'$ is also parallel with respect to $\nabla'$. Hence $(M',\nabla',P',g')$ is a para-K\"{a}hler-like statistical manifold.

Next, let $F=\pi^{-1}(p')$, $p'\in M'$, be a fiber of the submersion. Then from Theorem \ref{T1} we have that $(F,\widehat{P},\widehat{g})$  is an almost para-Hermitian-like manifold equipped with a statistical structure $(\widehat{\nabla},\widehat{g})$. By using \eqref{11} we deduce
    \begin{equation}\label{26}
    (\nabla_UP)V=(\widehat{\nabla}_UP)V+(T_U\widehat{P} V-P T_UV),\
    \end{equation}
for all $U,V\in\Gamma(\mathcal{V})$.

On the other hand, because $P$ is parallel with respect to $\nabla$, we derive from \eqref{26}
    \begin{equation}\label{28}
    (\widehat{\nabla}_UP)V=0
    \end{equation}
and
    \begin{equation}\label{29}
    T_U\widehat{P} V=P T_UV.
    \end{equation}

Finally, we conclude from \eqref{28} that $(F,\widehat{\nabla},\widehat{P},\widehat{g})$ is a para-K\"{a}hler-like statistical manifold.
\end{proof}

\begin{thm}\label{T3}
Let  $(M,\nabla,P,g)$ be a para-K\"{a}hler-like statistical manifold and let $(M',P',g')$ be an almost para-Hermitian-like manifold endowed with a statistical structure $(\nabla',g')$. If  $\pi:M\rightarrow M'$ is a para-K\"{a}hler-like statistical submersion, then:

i. $T_{\widehat{P}U}\widehat{P}V=T_UV$, for all $U,V\in\Gamma(\mathcal{V})$;

ii. $A_XY=A^*_XY=0$, for all $X,Y\in\Gamma(\mathcal{H})$, provided that ${\rm rank}(\widehat{P}+\widehat{P}^*)$ coincides with the dimension of the fibers.
\end{thm}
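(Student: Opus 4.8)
For part~i, the plan is to leverage the single identity $T_U\widehat{P}V=P\,T_UV$ already obtained in the proof of Theorem~\ref{T2} (equation~\eqref{29}), together with the symmetry \eqref{7} of $T$ on vertical vectors and the relation $P^2=\mathrm{Id}$ from \eqref{16}. Since $\mathcal{V}$ is $P$-invariant and $P$ commutes with the projectors by Proposition~\ref{P111}, we have $\widehat{P}V=PV$ for $V\in\Gamma(\mathcal{V})$, so \eqref{29} reads $T_U(PV)=P\,T_UV$. Using the symmetry \eqref{7} I would write $T_{PU}(PV)=T_{PV}(PU)$, then apply \eqref{29} in the first slot and the symmetry once more to get $T_{PV}(PU)=P\,T_{PV}U=P\,T_U(PV)=P^2\,T_UV=T_UV$. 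Hence $T_{\widehat{P}U}\widehat{P}V=T_UV$, which is exactly~i. This part is routine and presents no real obstacle.

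For part~ii, I would first record two structural identities for $A$. Because $P$ is $\nabla$-parallel and commutes with the horizontal and vertical projectors (Proposition~\ref{P111}), taking the vertical part of $\nabla_X(PY)=P\nabla_XY$ in \eqref{10} yields $A_X(PY)=P\,A_XY$ for all $X,Y\in\Gamma(\mathcal{H})$. Dually, Proposition~\ref{P1} gives that $P^*$ is $\nabla^*$-parallel, so the conjugate computation gives $A^*_X(P^*Y)=P^*A^*_XY$. Combining this last relation with \eqref{9}, in the form $A^*_YX=-A_XY$, I would derive the companion identity $P^*A_XY=A_{P^*X}Y$. Adding the two displays gives
\[
(\widehat{P}+\widehat{P}^*)A_XY=A_X(PY)+A_{P^*X}Y,
\]
which makes sense because $A_XY$ is vertical and $\mathcal{V}$ is invariant under $P$ and $P^*$.

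The hypothesis that $\mathrm{rank}(\widehat{P}+\widehat{P}^*)$ equals the fiber dimension means precisely that $\widehat{P}+\widehat{P}^*$ is invertible on $\mathcal{V}$. Thus, once the right-hand side above is shown to vanish, i.e. once $A_X(PY)=-A_{P^*X}Y$ is established, invertibility forces $A_XY=0$; and then \eqref{9} immediately gives $A^*_XY=-A_YX=0$, completing~ii. So the whole weight of the argument rests on proving $A_X(PY)+A_{P^*X}Y=0$, and this is the step I expect to be the genuine obstacle.

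The difficulty is that manipulations using only \eqref{7}, \eqref{8}, \eqref{9} together with the two parallelism identities are tautological: converting one slot via \eqref{8} or \eqref{9} merely reproduces the identity one started from, and pairing $(\widehat{P}+\widehat{P}^*)A_XY$ against a vertical $U$ through the duality \eqref{15} and the compatibility \eqref{18} collapses, by the self-consistency of \eqref{15}, back to the very statement $A^*_XW=0$ one is trying to prove. Consequently I would not expect a purely formal identity argument to succeed, and would instead exploit the finer structure coming from $\nabla P=0$ with $\nabla$ torsion-free: the $\pm 1$-eigendistributions of $P$ are then integrable and $\nabla$-parallel, and since $\mathcal{H}$ and $\mathcal{V}$ are $P$-invariant they split into eigenbundles along which $A$ becomes graded. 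Using \eqref{18} one checks that the metric pairs the $+1$-eigenbundle of $P$ with the $-1$-eigenbundle of $P^*$ (and vice versa), so that the invertibility of $\widehat{P}+\widehat{P}^*$ can be used to annihilate each graded component of $A_XY$ separately. Carrying out this decomposition cleanly, and verifying that the cross-pairings forced by the rank hypothesis leave no room for a nonzero graded piece, is the technical heart of the proof.
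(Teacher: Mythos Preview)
Your proof of part~i is correct and essentially identical to the paper's.

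For part~ii, your setup is fine and agrees with the paper through the display $(\widehat P+\widehat P^*)A_XY=A_X(PY)+A_{P^*X}Y$. The gap is that you then abandon the direct approach, declaring that pairing against a vertical $U$ via \eqref{15} and \eqref{18} ``collapses back'' to the statement to be proved. This diagnosis is wrong, and the eigenbundle-decomposition program you sketch in its place is vague and never executed. What you are missing is a \emph{second} parallelism identity, this time in the first slot with a vertical second argument: for basic $X$ and vertical $U$ one has $h\nabla_UX=A_XU$ (since $[X,U]$ is vertical), and applying $\nabla^*P^*=0$ to $\nabla^*_U(P^*X)$ gives $A^*_{P^*X}U=P^*A^*_XU$. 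This is genuinely new information, not derivable from your second-slot identities \eqref{32}, \eqref{33} and \eqref{9}.

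With that identity in hand, the computation you thought circular closes immediately: pairing your display against $U$ and using \eqref{15}, \eqref{18} and the new identity yields
\[
g\big((\widehat P+\widehat P^*)A_XY,U\big)=g(Y,A^*_{P^*X}U)+g(A_{P^*X}Y,U)=g(Y,\nabla^*_{P^*X}U)+g(\nabla_{P^*X}Y,U),
\]
and the right-hand side is $(P^*X)\,g(Y,U)=0$ by the defining relation \eqref{1} of the conjugate connection. It is this final appeal to \eqref{1}, together with the first-slot identity for basic vector fields, that you overlooked; no eigenbundle decomposition is needed.
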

\begin{proof}
i. Since $T$ has the symmetry property for vertical vector fields (cf. \eqref{7}), using \eqref{16} and \eqref{29} we derive for all $U,V\in\Gamma(\mathcal{V})$:
\begin{eqnarray}
    T_{\widehat{P} U}\widehat{P} V&=&P T_{\widehat{P} U}V\nonumber\\
    &=&P T_V \widehat{P} U\nonumber\\
    &=&P^2 T_VU\nonumber\\
    &=&T_VU\nonumber\\
    &=&T_UV.\nonumber
    \end{eqnarray}

ii.  Using \eqref{10} we obtain
    \begin{equation}\label{30}
    (\nabla_XP)Y=(h\nabla_XP)Y+(A_XPY-\widehat{P} A_XY).
    \end{equation}
for all $X,Y\in\Gamma(\mathcal{H})$.

Now, because $P$ is parallel with respect to $\nabla$, we deduce from \eqref{30} that
    \begin{equation}\label{31}
    (h\nabla_XP)Y=0
    \end{equation}
and
    \begin{equation}\label{32}
    A_XP Y=\widehat{P} A_XY.
    \end{equation}

    Similarly, we deduce that
    \begin{equation}\label{33}
    A^*_XP^* Y=\widehat{P}^* A^*_XY,
    \end{equation}
for all $X,Y\in\Gamma(\mathcal{H})$.

Next, we take $X,Y\in\Gamma(\mathcal{H})$ and $U\in\Gamma(\mathcal{V})$. Then, making use of \eqref{9}, \eqref{15}, \eqref{18}, \eqref{32}  and \eqref{33}, we obtain
\begin{eqnarray}\label{33b}
    g((\widehat{P}+\widehat{P}^*)A_XY,U)&=&g(\widehat{P}A_XY,U)+g(\widehat{P}^*A_XY,U)\nonumber\\
    &=&g(A_XPY,U)-g(\widehat{P}^*A^*_YX,U)\nonumber\\
    &=&-g(PY,A^*_XU)-g(A^*_Y{P}^*X,U)\nonumber\\
    &=&g(Y,P^*A^*_XU)+g(A_{P^*X}Y,U).
    \end{eqnarray}

On the other hand, if $X$ is basic then $[X,U]\in\Gamma(\mathcal{V})$ for $U\in\Gamma(V)$, and taking account of \eqref{6} we get
\begin{eqnarray}
g(A_XU,Y)&=&g(\nabla_XU,Y)\nonumber\\
&=&g([X,U]+\nabla_UX,Y)\nonumber\\
&=&g(\nabla_UX,Y),\nonumber
\end{eqnarray}
which implies that $\nabla_UX=A_XU$ if $X$ is basic. Therefore, we have
\begin{eqnarray}
g(A_{PX}U,Y)&=&g(\nabla_U(PX),Y)\nonumber\\
&=&g((\nabla_UP)X+P\nabla_UX,Y)\nonumber\\
&=&g(PA_XU,Y).\nonumber
\end{eqnarray}

Thus we find $A_{PX}U=PA_XU$ and similarly we derive
\begin{equation}\label{37}
A^*_{PX}U=P^* A^*_XU,
\end{equation}
provided that $X$ is basic.


Making now use of \eqref{15} and  \eqref{37} in \eqref{33b} we deduce
\[
    g((\widehat{P}+\widehat{P}^*)A_XY,U)=g(Y,A^*_{P^*X}U)+g(A_{P^*X}Y,U)
\]
and taking account of \eqref{1} and \eqref{6}, we obtain
\begin{eqnarray}
    g((\widehat{P}+\widehat{P}^*)A_XY,U)&=&g(Y,\nabla^*_{P^*X}U)+g(\nabla_{P^*X}Y,U)\nonumber\\
    &=&(P^*X)g(Y,U)\nonumber\\
    &=&0.\nonumber
    \end{eqnarray}

Hence, we deduce that
\[
(\widehat{P}+\widehat{P}^*)A_XY=0
\]
and the conclusion is now clear taking account of \eqref{9}.
\end{proof}

\begin{cor}\label{C1}
If  $\pi:M\rightarrow M'$ is a para-K\"{a}hler-like statistical submersion such that $\widehat{P}=\widehat{P}^*$, then
$A_XY=A^*_XY=0$, for all $X,Y\in\Gamma(\mathcal{H})$.
\end{cor}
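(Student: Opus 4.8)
The plan is to obtain this statement as an immediate specialization of Theorem~\ref{T3}(ii), so that the only thing requiring verification is that the hypothesis $\widehat{P}=\widehat{P}^*$ automatically forces the rank condition ${\rm rank}(\widehat{P}+\widehat{P}^*)=\dim F$ demanded there. In other words, I would not reprove anything about the tensors $A$ and $A^*$; I would only check that the cleaner symmetry hypothesis of the corollary is subsumed by the technical hypothesis of the theorem.

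First I would recall, from Theorem~\ref{T1} and the remark preceding it, that the restriction $\widehat{P}=P|_F$ is an almost product structure on each fiber $F$, so that $\widehat{P}^2={\rm Id}$ on the vertical bundle $\mathcal{V}$. This single identity is the crux: it shows that $\widehat{P}$ is its own inverse and is therefore a pointwise linear isomorphism of each vertical space $\mathcal{V}_p$. Next, under the assumption $\widehat{P}=\widehat{P}^*$ one has $\widehat{P}+\widehat{P}^*=2\widehat{P}$, and since $\widehat{P}$ is invertible, so is $2\widehat{P}$. Consequently ${\rm rank}(\widehat{P}+\widehat{P}^*)$ equals $\dim\mathcal{V}_p=\dim F$ at every point, which is exactly the hypothesis of Theorem~\ref{T3}(ii). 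Applying that theorem then yields $A_XY=A^*_XY=0$ for all $X,Y\in\Gamma(\mathcal{H})$.

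I expect no genuine obstacle here: the entire content of the corollary is the elementary observation that an almost product structure is pointwise invertible, so the averaged operator $\widehat{P}+\widehat{P}^*=2\widehat{P}$ cannot be rank-deficient and the rank assumption of Theorem~\ref{T3}(ii) is satisfied for free. The only point worth flagging is that the equality $\widehat{P}=\widehat{P}^*$ must be read as an identity of $(1,1)$ tensors on the vertical bundle (equivalently, that $\widehat{P}$ is skew-adjoint with respect to $\widehat{g}$, making each fiber genuinely almost para-Hermitian rather than merely para-Hermitian-like); once this is understood, the substitution $\widehat{P}+\widehat{P}^*=2\widehat{P}$ is legitimate and the argument closes at once.
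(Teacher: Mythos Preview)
Your proposal is correct and follows exactly the approach intended in the paper: the corollary is obtained immediately from Theorem~\ref{T3}(ii), with the only additional remark being that $\widehat{P}=\widehat{P}^*$ gives $\widehat{P}+\widehat{P}^*=2\widehat{P}$, which is invertible since $\widehat{P}^2={\rm Id}$, so the rank hypothesis there holds automatically. The paper's proof is simply the one-line statement that the assertion is clear from Theorem~\ref{T3}, and your write-up merely makes explicit the elementary reason why.
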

\begin{proof}
This assertion  is clear  from Theorem \ref{T3}.
\end{proof}

\begin{cor}\label{C2}
If $\pi:M\rightarrow M'$ is a para-K\"{a}hler-like statistical submersion such that $\widehat{P}=\widehat{P}^*$, then the horizontal distribution $\mathcal{H}$ is completely integrable.
\end{cor}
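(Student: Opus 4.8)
The plan is to reduce complete integrability to a purely algebraic condition on the O'Neill-type tensor $A$, and then invoke the vanishing already established in the preceding corollary. Recall that, by the Frobenius theorem, the horizontal distribution $\mathcal{H}$ is completely integrable if and only if it is involutive, i.e. $[X,Y]\in\Gamma(\mathcal{H})$ for all $X,Y\in\Gamma(\mathcal{H})$. Since $\mathcal{H}$ and $\mathcal{V}$ are complementary and $[X,Y]$ always decomposes as $h[X,Y]+v[X,Y]$, involutivity is equivalent to the single condition
\[
v[X,Y]=0\qquad\text{for all }X,Y\in\Gamma(\mathcal{H}).
\]
So the whole proof hinges on showing that this vertical component vanishes.

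The key observation is that the tensor $A$ precisely measures this obstruction. Indeed, from \eqref{8} we have the identity
\[
v[X,Y]=A_XY-A_YX
\]
for all $X,Y\in\Gamma(\mathcal{H})$. Thus the first step is simply to quote this relation, which expresses the integrability defect of $\mathcal{H}$ entirely in terms of $A$.

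The second step is to apply Corollary \ref{C1}. Under the standing hypothesis $\widehat{P}=\widehat{P}^*$, that corollary yields $A_XY=A^*_XY=0$ for all horizontal $X,Y$. Substituting $A_XY=0$ and $A_YX=0$ into the identity above gives $v[X,Y]=A_XY-A_YX=0$, so $\mathcal{H}$ is involutive and hence completely integrable by Frobenius. I do not expect any genuine obstacle here: the substantive analytic work was already carried out in Theorem \ref{T3} (establishing $(\widehat{P}+\widehat{P}^*)A_XY=0$ and thence $A_XY=0$), so the only conceptual point is recognising, via \eqref{8}, that the horizontal integrability tensor coincides with the antisymmetrisation of $A$. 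The mildest care needed is to ensure the hypothesis $\widehat{P}=\widehat{P}^*$ is exactly what licenses the appeal to Corollary \ref{C1}, rather than the weaker rank condition of Theorem \ref{T3}; once that is noted, the argument closes immediately.
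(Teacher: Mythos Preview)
Your argument is correct and matches the paper's own proof exactly: the paper simply states that the conclusion follows immediately from Corollary~\ref{C1} together with \eqref{8}, which is precisely the two-step reduction you spelled out in detail. There is nothing to add.
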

\begin{proof}
The conclusion follows immediately from Corollary \ref{C1} and \eqref{8}.
\end{proof}

\begin{rem}
We note that Corollary \ref{C2} generalizes Theorem 3.2 from \cite{GUND}. Indeed, if  $\pi:M\rightarrow M'$ is a para-K\"{a}hler submersion, then the condition $\widehat{P}=\widehat{P}^*$ is trivially satisfied and it follows that $\mathcal{H}$ is completely integrable.
\end{rem}

\begin{thm}
Let $\pi:M\rightarrow M'$ be a para-K\"{a}hler-like statistical submersion, such that the total space of the submersion is of type para-K\"{a}hler space form. Then:\\
i.  If ${\rm rank}(\widehat{P}+\widehat{P}^*)$ coincides with the dimension of the fibers, then the base space of the submersion is of type para-K\"{a}hler space form.\\
ii. If $\pi$ is a statistical submersion with isometric fibers, then each fiber is a totally geodesic submanifold of the total space $M$, of type para-K\"{a}hler space form.\\
iii. If $\pi$ is a statistical submersion with isometric fibers such that ${\rm rank}(\widehat{P}+\widehat{P}^*)$ coincides with the dimension of the fibers, then the base space and each fiber are flat. Moreover, the total space of the submersion is a locally product space of the base space and fiber.
\end{thm}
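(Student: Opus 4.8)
The plan is to read part~(iii) as the simultaneous imposition of the hypotheses of parts~(i) and~(ii): here the fibers are isometric, so $T\equiv 0$, and the rank condition holds, so Theorem~\ref{T3} gives $A=A^{*}=0$. Granting parts~(i) and~(ii), the base and every fiber are already of type para-K\"{a}hler space form with the \emph{same} structure constant $c$ occurring in \eqref{22} for the total space. The whole argument therefore collapses to proving that this common constant vanishes: once $c=0$ is established, \eqref{22} forces $R\equiv 0$ on $M$, and the inherited space-form expressions on the base and on the fibers force $R'\equiv 0$ and $\widehat{R}\equiv 0$, so all three spaces are flat. The local product decomposition is then read off from the vanishing of the fundamental tensors.

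The key step is the vanishing of $c$, and the idea is to evaluate the \emph{mixed} curvature $R(U,V)X$, with $U,V\in\Gamma(\mathcal{V})$ and $X$ a basic horizontal field, in two ways. Since $A=0$ one has $\nabla_{V}X=v\nabla_{V}X=T_{V}X\in\Gamma(\mathcal{V})$, while $T\equiv 0$ makes $\nabla_{W}$ preserve $\mathcal{V}$ for vertical $W$; propagating this through $R(U,V)X=\nabla_{U}\nabla_{V}X-\nabla_{V}\nabla_{U}X-\nabla_{[U,V]}X$ (using that $[U,V]$ is vertical) shows that $R(U,V)X$ is \emph{vertical}. On the other hand, inserting $U,V\in\Gamma(\mathcal{V})$ and $X\in\Gamma(\mathcal{H})$ into \eqref{22} and discarding every term pairing a horizontal with a vertical factor leaves only
\[
R(U,V)X=\frac{c}{4}\,[\,g(U,PV)-g(PU,V)\,]\,PX,
\]
which is \emph{horizontal}, because $PX\in\Gamma(\mathcal{H})$ by Proposition~\ref{P111}. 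A vector that is at once vertical and horizontal is zero, so the bracket times $PX$ vanishes. Rewriting the bracket through \eqref{18} as $g(U,(P+P^{*})V)$, and noting $PX\neq 0$ whenever $X\neq 0$ (since $P^{2}={\rm Id}$), this reads $c\,g\big(U,(\widehat{P}+\widehat{P}^{*})V\big)=0$ for all vertical $U,V$. The rank hypothesis says precisely that $\widehat{P}+\widehat{P}^{*}$ is invertible on each fiber; as $g$ is nondegenerate on $\mathcal{V}$, the pairing cannot vanish identically, whence $c=0$.

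With $c=0$, \eqref{22} yields $R\equiv 0$, and the space-form formulas inherited by the base (part~(i)) and by the fibers (part~(ii)) yield $R'\equiv 0$ and $\widehat{R}\equiv 0$, so the base and each fiber are flat. For the local product, $A=A^{*}=0$ gives $v[X,Y]=A_{X}Y-A_{Y}X=0$ by \eqref{8}, so $\mathcal{H}$ is integrable; the fibers (leaves of $\mathcal{V}$) are $\nabla$-totally geodesic because $T\equiv 0$ (cf. \eqref{11}), and the leaves of $\mathcal{H}$ are $\nabla$-totally geodesic because $A=0$. Invoking the statistical analogue of the classical fact that a semi-Riemannian submersion with vanishing fundamental tensors is locally a product, $M$ is exhibited locally as the product of the base and a fiber, and since both factors are flat this is the asserted decomposition.

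The step I expect to be the genuine obstacle is the derivation of $c=0$: one must recognize that the informative component is the mixed curvature $R(U,V)X$ rather than the purely horizontal or purely vertical ones, and then convert the algebraic clash (vertical versus horizontal) into the vanishing of $c$ by combining \eqref{18} with the rank condition via nondegeneracy of $\widehat{P}+\widehat{P}^{*}$. A secondary point demanding care is the product statement itself: in the statistical setting the condition of isometric fibers, $T_{U}V=0$, is equivalent through \eqref{14} to $T^{*}_{U}X=0$ rather than to the full vanishing of $T$, so the decomposition is best understood in the affine sense provided by the two integrable, $\nabla$-totally geodesic complementary distributions, and one should check that the dual connection $\nabla^{*}$ splits compatibly.
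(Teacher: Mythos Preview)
Your argument is correct and is, in substance, the paper's proof made explicit: the paper's entire proof of this theorem is the single sentence ``The conclusions follow from the analogues of the O'Neill equations for a statistical submersion \cite[Theorem~2.1]{TAK2} and taking account of Theorem~\ref{T3}.''  Your mixed-curvature computation---showing $R(U,V)X$ is vertical from $A=0$ and $T_{U}V=0$, yet horizontal from the space-form formula~\eqref{22}, and then rewriting $g(U,PV)-g(PU,V)=g\big(U,(\widehat P+\widehat P^{*})V\big)$ via~\eqref{18} so that the rank hypothesis forces $c=0$---is precisely the content those O'Neill equations encode for this configuration, worked out by hand.  So there is no genuine difference in approach; you have simply supplied the details the paper delegates to a citation.

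Your closing caveat about the product decomposition is well placed and in fact goes beyond what the paper justifies.  The hypothesis of isometric fibers gives only $T_{U}V=0$, hence $T^{*}_{U}X=0$ through~\eqref{14}, but not obviously $T^{*}_{U}V=0$; the paper does not comment on this asymmetry.  One partial remedy you could add: since $c=0$ forces $R\equiv 0$, the paper's remark after~\eqref{22} (that $R^{*}$ satisfies the $P^{*}$-analogue with the \emph{same} constant) gives $R^{*}\equiv 0$ as well, so both connections are flat and the horizontal leaves are totally geodesic for each of $\nabla$ and $\nabla^{*}$; this at least makes the affine splitting symmetric in the two connections, though the metric (Levi-Civita) product statement still rests on the cited O'Neill machinery rather than on anything argued in the paper itself.
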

\begin{proof}
The conclusions follow from the analogues of the O'Neill equations for a statistical submersion \cite[Theorem 2.1]{TAK2} and taking account of Theorem \ref{T3}.
\end{proof}

\section{Examples of para-K\"{a}hler-like statistical manifolds and submersions}

\begin{ex}
According to the Remark \ref{R1}, any para-K\"{a}hler manifold is a para-K\"{a}hler-like statistical manifold. Notice that several examples of para-K\"{a}hler manifolds are collected in \cite{CFG}. See also \cite{CHEN2017}.
\end{ex}

\begin{ex}
We consider the canonical coordinates $(x_1,\ldots,x_n,y_1,\ldots,y_n)$ on the manifold $\mathbb{R}^{2n}$ equipped with the flat affine connection $\nabla$, and define a semi-Riemannian metric $g$ on $\mathbb{R}^{2n}$ by
\[
g=\sum_{i=1}^{n}\epsilon_i(k{\rm d}x_i^2-{\rm d}y_i^2),
\]
for an arbitrary non-zero real constant $k$, where $\epsilon_1,\ldots,\epsilon_n\in\{-1,+1\}$.

We also define an almost product structure $P$ on $\mathbb{R}^{2n}$ by
\[
P(\partial_{x_i})=\partial_{y_i},\ P(\partial_{y_i})=\partial_{x_i}.
\]

Then it follows immediately that the quadruplet $(\mathbb{R}^{2n},\nabla,P,g)$ defined above is a para-K\"{a}hler-like statistical manifold. We note that
the conjugate connection is also flat and the almost product structure $P^*$ is given by
\[
P^*(\partial_{x_i})=k\partial_{y_i},\ P^*(\partial_{y_i})=\frac{1}{k}\partial_{x_i}.
\]

Moreover, it is obvious that $(\mathbb{R}^{2n},\nabla,P,g)$ is a flat para-K\"{a}hler-like statistical manifold.
\end{ex}

\begin{ex}\label{EX1}
We consider the $2n$-dimensional semi-Riemannian manifold $(\mathbb{M}^{2n},g)$, where
\[\mathbb{M}^{2n}=\{(x_1,y_1,\ldots,x_n,y_n)\in\mathbb{R}^{2n}|y_1\neq 0,\ldots,y_n\neq 0\}\]
and the metric $g$ is given by
\[
g=\sum_{i=1}^{n}\frac{\epsilon_i}{y_i^2}(k{\rm d}x_i^2-l{\rm d}y_i^2),
\]
for arbitrary non-zero real constants $k$ and $l$,  where $\epsilon_1,\ldots,\epsilon_n\in\{-1,+1\}$.


Next we suppose that $k+l\neq0$ and define an affine connection $\nabla$ on manifold as follows
\[
\nabla_{\partial_{x_i}}\partial_{x_j}=\nabla_{\partial_{y_i}}\partial_{y_j}=-\frac{2k\delta_{ij}}{(k+l)y_i}\partial_{y_i},
\]
\[
\nabla_{\partial_{x_i}}\partial_{y_j}=\nabla_{\partial_{y_j}}\partial_{x_i}=-\frac{2k\delta_{ij}}{(k+l)y_i}\partial_{x_i}.
\]

We also define an almost product structure $P$  by
\[
P(\partial_{x_i})=\partial_{y_i},\ P(\partial_{y_i})=\partial_{x_i}.
\]

Now we can easily check that $(\mathbb{M}^{2n},\nabla,P,g)$ is a para-K\"{a}hler-like statistical manifold. In particular, we find
the almost product structure $P^*$ given by
\[
P^*(\partial_{x_i})=\frac{k}{l}\partial_{y_i},\ P^*(\partial_{y_i})=\frac{l}{k}\partial_{x_i}
\]
and the conjugate connection $\nabla^*$ defined by
\[
\nabla^*_{\partial_{x_i}}\partial_{x_j}=-\frac{2k^2\delta_{ij}}{l(k+l)y_i}\partial_{y_i},\ \nabla^*_{\partial_{y_i}}\partial_{y_j}=-\frac{2l\delta_{ij}}{(k+l)y_i}\partial_{y_i}
\]
\[
\nabla^*_{\partial_{x_i}}\partial_{y_j}=\nabla^*_{\partial_{y_j}}\partial_{x_i}=-\frac{2l\delta_{ij}}{(k+l)y_i}\partial_{x_i}.
\]

Moreover, we remark that this example proves that we can construct para-K\"{a}hler-like statistical manifolds of any signature, unlike the para-K\"{a}hler manifolds which are always of neutral signature.
\end{ex}

\begin{ex}\label{ex1b}
If $(M,P,g)$ is an almost para-Hermitian-like manifold endowed with a statistical structure $(\nabla,g)$, then we prove that the tangent bundle $TM$ is an almost para-Hermitian-like manifold that can be endowed with a statistical structure. We consider on the tangent bundle $TM$
the Sasaki metric $G$ defined by
\[
G(A,B)=g(KA,KB)+g(\pi_*A,\pi_*B),
\]
for all vector fields $A,B$ on $TM$, where $\pi$ is the natural projection of $TM$ onto $M$ and $K$ is
the connection map associated with the Levi-Civita connection of the metric $g$ (see \cite{D}).

We note that if $X\in\Gamma(TM)$, then there exists exactly one
vector field on $TM$, denoted by $X^h$ and called the \emph{horizontal lift}, resp. denoted $X^v$ and called the \emph{vertical
lift} of $X$, such that we have for all $U\in TM$:
\[
\pi_*X_U^h=X_{\pi(U)},\ \pi_*X_U^v=0_{\pi(U)},\ KX_U^h=0_{\pi(U)},\ KX_U^v=X_{\pi(U)}.
\]

It is known from \cite[Theorem 3]{II} that one can define a torsion free linear connection $\nabla'$ on $TM$ compatible to the Sasaki metric $G$. Hence $(TM,\nabla',G)$ is a statistical manifold. Using the almost product structure $P$ on $M$, we can define a tensor field $P'$ of type $(1,1)$ on $TM$ by
\begin{equation}\nonumber
       \left\{\begin{array}{lcr}
       P' X^h=(P X)^h\\
       P' X^v=(P X)^v
       \end{array}\right..
       \end{equation}

It is easy to see now that $P'$ is almost product structure on $TM$ and $(TM,P',G)$ is an almost para-Hermitian-like manifold. Moreover, it follows that $(TM,\nabla',P',G)$ is a para-K\"{a}hler-like statistical manifold if and only if $(M,\nabla,P,g)$
is a flat para-K\"{a}hler-like statistical manifold.
\end{ex}

\begin{ex}\label{EX3}
Let $M=\{p_{\xi}|\xi\in\Xi\}$ be an $n$-dimensional statistical model such that probability distributions $p_{\xi}=p(x;\xi)$ can be expressed in terms of functions $\{C,F_1,\ldots,F_n\}$ on $\chi$ and a function $\psi$ on $\Xi$ as
\begin{equation}
p(x;\xi)=\exp\left[C(x)+\sum_{i=1}^{n}\xi^iF_i(x)-\psi(\xi)\right].
\end{equation}

Then the statistical model $M$ is said to be an  \emph{exponential family} (see \cite{AMN}) and from the normalization condition $\int_\chi p(x;\xi)dx=1$ we derive
\[
\psi(\xi)=\log\int_{\chi}\exp\left[C(x)+\sum_{i=1}^{n}\xi^iF_i(x)\right]dx.
\]

Let $(M,\nabla^{(\alpha)},g)$ be a statistical manifold of the exponential family $M$. Then from Remark \ref{remi} it follows that the Fisher metric $g$ and the $\alpha$-connection are given respectively by
\[
g_{ij}=\partial_i\partial_j\psi
\]
and
\[
\nabla^{(\alpha)}_{\partial_i}\partial_j=\frac{1-\alpha}{2}\partial_sg_{ij}g^{st}\partial_t,
\]
where $g^{st}$ are the components of $g^{-1}$.

We define now an almost product structure $P^{(1)}$ on $(M,\nabla^{(1)},g)$ by components
\[
(P^{(1)})_{i}^{j}=a_{i}^{j},
\]
where $a_{i}^{j}$ are real constants satisfying $a_{i}^{k}a_{k}^{j}=\delta_{i}^{j}$. Then it follows immediately that $(M,\nabla^{(1)},P^{(1)},g)$ is a para-K\"{a}hler-like statistical manifold.
We can also define an almost product structure $P^{(-1)}$ on $(M,\nabla^{(-1)},g)$ by components
\[
(P^{(-1)})_{i}^{j}=-a_{s}^{k}g_{ki}g^{sj}
\]
and then we find that $(M,\nabla^{(-1)},P^{(-1)},g)$ is a para-K\"{a}hler-like statistical manifold.

Due to the fact that several standard statistical models are shown to belong to the exponential family, including the
well-known normal distribution, Poisson distribution, multinomial distribution, multivariate normal distribution, Dirichlet distribution and Von Mises-Fisher distribution (see \cite{AMN,TAK3}), it follows that the corresponding statistical manifolds $(M,\nabla^{(\alpha)},g)$ for all these distributions are para-K\"{a}hler-like statistical manifolds, provided that $\alpha=\pm1$.
\end{ex}

\begin{ex}
Let $(\mathbb{M}^{2n},\nabla,P,g)$ be the para-K\"{a}hler-like statistical manifold constructed in Example \ref{EX1} having signature $(r,2n-r)$. If we consider a similar para-K\"{a}hler-like statistical manifold $(\mathbb{M}^{2m},\nabla,P,g)$ of dimension $m$ and with signature $(s,2m-s)$, such that $m<n$ and $s\leq r$, then it follows easily that
the map $\pi:\mathbb{M}^{2n}\rightarrow\mathbb{M}^{2m}$ defined by
\[
\pi(x_1,y_1,\ldots,x_n,y_n)=(x_1,y_1,\ldots,x_m,y_m)
\]
is a para-K\"{a}hler-like statistical submersion with isometric fibers.
\end{ex}

\begin{ex}
Let $(M,P,g)$ be an almost para-Hermitian-like manifold endowed with a statistical structure $(\nabla,g)$ and
$(TM,P',G)$ be the almost para-Hermitian-like manifold equipped with the statistical structure $(\nabla',G)$ constructed in Example \ref{ex1b}. Then the canonical projection $\pi:TM\rightarrow M$ is a para-holomorphic map because
\[
\pi_*P' X^v=\pi_*(PX)^v=0=P\pi_*X^v,
\]
\[
\pi_*P' X^h=\pi_*(P X)^h=P X=P\pi_*X^h.
\]

Hence, indeed we have $\pi_*\circ P'=P\circ\pi_*$ and one can deduce now easily that $\pi$ is an almost para-Hermitian-like statistical submersion. Moreover, it follows that $\pi$ is a para-K\"{a}hler-like statistical submersion if and only if $(M,\nabla,P,g)$ is a flat para-K\"{a}hler-like statistical manifold.
\end{ex}

\section*{Acknowledgement}
The author would like to thank the anonymous reviewer for the thoughtful comments on the manuscript. This work was supported by a grant of Ministry of Research and Innovation, CNCS-UEFISCDI, project number PN-III-P4-ID-PCE-2016-0065, within PNCDI III.

Gabriel Eduard V\^{I}LCU$^{1,2}$ \\
      $^1$University of Bucharest, Faculty of Mathematics and Computer Science,\\
      Research Center in Geometry, Topology and Algebra,\\
      Str. Academiei, Nr. 14, Sector 1, Bucure\c sti 70109-ROMANIA\\
      e-mail: gevilcu@yahoo.com\\
      $^2$Petroleum-Gas University of Ploie\c sti,\\
      Department of Cybernetics, Economic Informatics, Finance and Accountancy,\\
      Bd. Bucure\c sti, Nr. 39, Ploie\c sti 100680-ROMANIA\\
      e-mail: gvilcu@upg-ploiesti.ro

\end{document}